\def\bt{\begin{thm}}
\def\et{\end{thm}}
\def\bl{\begin{lem}}
\def\el{\end{lem}}
\def\bd{\begin{defi}}
\def\ed{\end{defi}}
\def\bc{\begin{cor}}
\def\ec{\end{cor}}
\def\bp{\begin{proof}}
\def\ep{\end{proof}}
\def\br{\begin{rem}}
\def\er{\end{rem}}
\newtheorem{thm}{Theorem}[section]
\newtheorem{prop}[thm]{Proposition}
\newtheorem{lem}[thm]{Lemma}
\newtheorem{defn}[thm]{Definition}
\newtheorem{rem}[thm]{Remark}
\newtheorem{cor}[thm]{Corollary}
\numberwithin{equation}{section}
\newcommand{\la}{\langle}
\newcommand{\ra}{\rangle}
\newcommand{\C}{\Bbb{C}}
\newcommand{\R}{\Bbb{R}^m}
\newcommand{\uz}{u^{(n)}}
\newcommand{\az}{a^{(n)}}
\newcommand{\p}{\Bbb{P}}
\newcommand{\prob}{{\bf{P}}}
\newcommand{\bthm}{\begin{thm}}
\newcommand{\ethm}{\end{thm}}
\newcommand{\bstp}{\begin{stp}}
\newcommand{\estp}{\end{stp}}
\newcommand{\blemma}{\begin{lemma}}
\newcommand{\elemma}{\end{lemma}}
\newcommand{\bprop}{\begin{prop}}
\newcommand{\eprop}{\end{prop}}
\newcommand{\bpf}{\begin{pf}}
\newcommand{\epf}{\end{pf}}
\newcommand{\bdefn}{\begin{defn}}
\newcommand{\edefn}{\end{defn}}
\newcommand{\brk}{\begin{rmrk}}
\newcommand{\erk}{\end{rmrk}}
\newcommand{\bcrl}{\begin{crl}}
\newcommand{\ecrl}{\end{crl}}
\title{On Global universality for zeros of random polynomials}
\author{Turgay Bayraktar}
\address{Faculty of Engineering and Natural Sciences, Sabanc{\i} University, \.{I}stanbul, Turkey}
\email{tbayraktar@sabanciuniv.edu}
\keywords{Random polynomial, distribution of zeros, global universality}
\subjclass[2000]{Primary 32A60; 60D05}
\begin{document}

\begin{abstract}
 In this work, we study asymptotic zero distribution of random multi-variable polynomials which are random linear combinations 
 $\sum_{j}a_jP_j(z)$ with i.i.d coefficients relative to a basis of orthonormal polynomials $\{P_j\}_j$ induced by a multi-circular weight function $Q$ defined on $\C^m$ satisfying suitable smoothness and growth conditions. In complex dimension $m\geq3$, we prove that $\Bbb{E}[(\log(1+|a_j|))^m]<\infty$ is a necessary and sufficient condition for normalized zero currents of random polynomials to be almost surely asymptotic to the (deterministic) extremal current $\frac{i}{\pi}\partial\overline{\partial}V_{Q}.$ In addition, in complex dimension one, we consider random linear combinations of orthonormal polynomials with respect to a regular measure in the sense of Stahl \& Totik and we prove analogous results in this setting. 
  \end{abstract}

\maketitle
\section{Introduction}
A random \textit{Kac polynomial} is of the form
$$f_n(z)=\sum_{j=0}^na_jz^j$$ where coefficients $a_j$ are independent complex Gaussian random variables of mean zero and variance one. A classical result due to Kac and Hammersley \cite{Kac,Ham} asserts that normalized zeros of Kac random polynomials of large degree tend to accumulate on the unit circle $S^1=\{|z|=1\}.$ This ensemble of random polynomials has been extensively studied (see eg. \cite{LO,HN,SV,IZ} and references therein). Recently, Ibragimov and Zaporozhets \cite{IZ} proved that for independent and identically distributed (i.i.d.) real or complex random variables $a_j$
\begin{equation}\label{IZc}
\Bbb{E}[\log(1+|a_j|)]<\infty
\end{equation} is a necessary and sufficient condition for zeros of random Kac polynomials to accumulate near the unit circle. In particular, under the condition (\ref{IZc}) asymptotic zero distribution of Kac polynomials is independent of the choice of the probability law of random coefficients. We refer to this phenomenon as \textit{global universality} for zeros of Kac polynomials.

In \cite{SZ3}, Shiffman and Zelditch remarked that it was an implicit choice of an inner product that produced the concentration of zeros of Kac polynomials around the unit circle $S^1.$ More generally, for a simply connected domain $\Omega\Subset\C$ with real analytic boundary $\partial\Omega$ and a fixed orthonormal basis (ONB) $\{P_j\}_{j=1}^{n+1}$ induced by a measure $\rho(z)|dz|$ where $\rho\in\mathcal{C}^{\omega}(\partial \Omega)$ and $|dz|$ denote arc-length, Shiffman and Zelditch proved that zeros of random polynomials
$$f_n(z)=\sum_{j=1}^{n+1}a_jP_j(z)\ \text{where}\  a_j \ \text{i.i.d standard complex Gaussians}$$  concentrate near the boundary $\partial\Omega$ as $n\to \infty.$ Furthermore, the empirical measures of zeros $$\frac{1}{n}\sum_{\{z:f_n(z)=0\}}\delta_z$$ converge weakly to the equilibrium measure $\mu_{\overline{\Omega}}$. Recall that for a non-polar compact set $K\subset \C$ the \textit{equilibrium measure} $\mu_K$ is the unique minimizer of the \textit{logarithmic energy} functional
$$\nu\to \int\int\log\frac{1}{|z-w|}d\nu(z)d\nu(w)$$ over all probability measures supported on $K.$ Later, Bloom \cite{Bloom2} observed that $\overline{\Omega}$ can be replaced by a regular compact set $K\subset \C,$ the inner product can be defined in terms of any Bernstein Markov measure (see also \cite{BloomS} for a generalization of this result to $\C^m$ for Gaussian random pluricomplex polynomials). More recently, Pritsker and Ramachandran \cite{PrK} observed that (\ref{IZc}) is a necessary and sufficient condition for zeros of random linear combinations of Szeg\"{o}, Bergman, or Faber polynomials (associated with Jordan domains bounded with analytic curves) to accumulate near the support of the corresponding equilibrium measure.\\ \indent 
 The purpose of this work is to study global universality for normalized zero currents of random multi-variable complex polynomials. Asymptotic zero distribution of multivariate random polynomials has been studied by several authors (see eg. \cite{SZ,DS3,BloomS,BloomL,B6, B7,B4}). We remark that randomization of the space of polynomials in these papers is different than that of \cite{IZ,KZ,PrK}. Namely, in the former ones each $\mathcal{P}_n$ are endowed with a $d_n:=dim(\mathcal{P}_n)$ fold product probability measure which leads to a sequence of polynomials (with $n^{th}$ coordinate has total degree at most $n$) chosen independently at random according to the $d_n$-fold product measure. On the other hand, the papers \cite{IZ,KZ,PrK} fix a random sequence of scalars for which one considers random linear combinations of a fixed basis for $\mathcal{P}_n$. We adopt the approach of \cite{IZ,KZ,PrK} in the present note.

 The setting is as follows: let $Q:\C^m\to \Bbb{R}$ be a \textit{weight function} satisfying  
 \begin{equation}\label{growth}
Q(z)\geq (1+\epsilon)\log\|z\|\ \text{for}\ \|z\|\gg 1
\end{equation} for some fixed $\epsilon>0.$ Throughout this note (unless otherwise stated), we assume that the function $Q:\C^m\to [0,\infty)$ is of class $\mathscr{C}^2$ and it is invariant under the action of the real torus $\Bbb{S}^m,$ the latter means that 
\begin{equation}\label{circular}
Q(z_1,\dots,z_m)=Q(|z_1|,\dots,|z_m|) \ \text{for all} \ (z_1,\dots,z_m)\in \Bbb{C}^m.
\end{equation}
One can define an associated weighted extremal function 
$$V_Q(z):=\sup\{u(z):u\in\mathcal{L}(\C^m), u\leq Q\ \text{on}\ \C^m\}$$ where $\mathcal{L}(\C^m)$ denotes the \textit{Lelong class} of pluri-subharmonic (psh) functions $u$ that satisfies $u(z)-\log^+\|z\|=O(1).$ We also denote by
$$\mathcal{L}^+(\C^m):=\{u\in \mathcal{L}(\C^m): u(z)\geq \log^+\|z\|+C_u\ \text{for some}\ C_u\in \Bbb{R}\}.$$
Seminal results of Siciak and Zaharyuta (see \cite{Klimek} and references therein) imply that $V_Q\in\mathcal{L}^+(\C^m)$ and that  $V_{Q}$ verifies
 \begin{equation}\label{envelope}
 V_Q(z)=\sup\{\frac{1}{\deg p}\log|p(z)|:p\ \text{is a polynomial and}\ \max_{z\in \C^m}|p(z)|e^{-deg(p)Q(z)}\leq 1\}.
 \end{equation}
Moreover, a result of Berman \cite[Proposition 2.1]{Berman1} implies that $V_Q$ is of class $\mathscr{C}^{1,1}$. 

Next, we define an inner product on the space $\mathcal{P}_n$ of multi-variable polynomials of degree at most $n$ by setting 
\begin{equation}\label{n}
\langle f_n,g_n\rangle_n:=\int_{\C^m}f_n(z)\overline{g_n(z)}e^{-2nQ(z)}dV_m(z)
\end{equation} where $dV_m$ denotes the Lebesgue measure on $\C^m$. We also let $\{P_j^n\}_{j=1}^{d_n}$ be the orthonormal basis (ONB) for $\mathcal{P}_n$ obtained by applying Gram-Schmidt algorithm in the Hilbert space $(\mathcal{P}_n, \la\cdot,\rangle_n)$ to the monomials $\{z^J\}_{|J|\leq n}$ where $J=(j_1,\dots,j_m)$ is m-multiindex and we assume that the monomials $\{z^J\}_{|J|\leq n}$ are ordered with respect to lexicographical ordering. Note that since $Q$ is $m-$circular we have $P_j^n(z)=c_J^nz^J$ for some deterministic constant $c^n_J$ and $J\in\Bbb{N}^m.$ 

Let $a_1,a_2,\dots$ be a sequence of i.i.d. real or complex random variables whose probability law denoted by $\prob$. Throughout this note, we assume that $a_j$ are non-degenerate, roughly speaking this means that $\prob[a_j=z]<1$ for every $z\in\C$ (see \S\ref{probprelim}.) A \textit{random polynomial} is of the form
$$f_n(z)=\sum_{j=1}^{d_n}a_jP_j^n(z)$$ where $d_n:=\dim(\mathcal{P}_n)={n+m\choose n}$.  
We also let $\mathcal{H}:=\cup_{n=1}^{\infty}\mathcal{P}_n$ and denote the corresponding probability space of polynomials by $(\mathcal{H},\p).$

\begin{thm}\label{main 1} Let $a_j$ be i.i.d. non-degenerate real or complex random variables satisfying
\begin{equation}\label{A}
\Bbb{E}[\big(\log(1+|a_j|)\big)^m]<\infty.
\end{equation}
If the dimension of complex Euclidean space $m \geq 3$ then almost surely in $\mathcal{H}$ $$\frac{1}{n}\log|f_n(z)|  \xrightarrow[n \to \infty]{} V_{Q}(z)$$ in $L^1_{loc}(\C^{m}).$ In particular, almost surely in $\mathcal{H}$
 $$\frac{i}{\pi}\partial\overline{\partial}(\frac{1}{n}\log|f_n(z)|)   \xrightarrow{} \frac{i}{\pi}\partial\overline{\partial}V_{Q}(z)$$ in the sense of currents as $n\to \infty.$

Furthermore, for all dimensions $m\geq 1,$ we have convergence in probability
$$\frac{i}{\pi}\partial\overline{\partial}(\frac{1}{n}\log|f_n(z)|)   \xrightarrow[]{} \frac{i}{\pi}\partial\overline{\partial}V_{Q}(z)$$ in the sense of currents as $n\to \infty.$ 
 \end{thm}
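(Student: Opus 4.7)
The plan is to establish $\frac{1}{n}\log|f_n|\to V_Q$ in $L^1_{\mathrm{loc}}(\C^m)$ almost surely for $m\ge 3$ and in probability for all $m\ge 1$; the current convergence then follows by weak continuity of $\partial\bar\partial$ on $L^1_{\mathrm{loc}}$-convergent families of psh functions. The essential deterministic input is that, since $Q$ is $\mathbb{S}^m$-circular, the monomials are pairwise orthogonal under $\langle\cdot,\cdot\rangle_n$, so $P_J^n=c_J^n z^J$ is a rescaled monomial. Classical weighted Bernstein--Markov/Siciak--Zaharyuta theory then gives $\frac{1}{n}\log\sum_{|J|\le n}|c_J^n z^J|^2\to 2V_Q$ in $L^1_{\mathrm{loc}}$ and, after upper semicontinuous regularization, pointwise locally uniformly on $\{V_Q<\infty\}$.

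For the upper bound, the moment hypothesis $\mathbb{E}[(\log(1+|a|))^m]<\infty$ is equivalent by the layer-cake identity to $\sum_k\mathbb{P}[\log(1+|a|)>\epsilon k^{1/m}]<\infty$, so Borel--Cantelli applied to the full coefficient sequence yields $\log(1+|a_j|)\le\epsilon j^{1/m}$ eventually almost surely. Since $d_n\sim n^m/m!$, this gives $\max_{j\le d_n}\log^+|a_j|\le\epsilon' n$ for $n$ large, and then the triangle inequality $|f_n(z)|\le d_n\max_j|a_j|\max_J|c_J^n z^J|$ produces $(\limsup\tfrac1n\log|f_n|)^*\le V_Q$ almost surely. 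Combined with the growth assumption on $Q$, this makes $\{\tfrac1n\log|f_n|\}$ almost surely a relatively compact family of psh functions in $L^1_{\mathrm{loc}}$, so every a.s.\ subsequential limit belongs to $\mathcal{L}(\C^m)$ and is dominated by $V_Q$.

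The lower bound is the harder half. I would exploit the non-degeneracy of $a_j$ through a Kolmogorov--Rogozin-style small-ball estimate: for some $r_0,c_0>0$ the Lévy concentration function of $a_1$ satisfies $\sup_w\mathbb{P}[|a_1-w|<r_0]\le 1-c_0$, and iterating (or convolving) produces $\mathbb{P}[|\sum a_j\zeta_j|\le\delta\max_j|\zeta_j|]$ small for small $\delta$, uniformly in the unit $\ell^\infty$-vector $\zeta$. Applied at a test point $z_0$ with $\zeta_J:=c_J^n z_0^J/\max_K|c_K^n z_0^K|$, this yields, for every $\epsilon>0$,
\[
\mathbb{P}\bigl[\tfrac{1}{n}\log|f_n(z_0)|<\max_{|J|\le n}\tfrac{1}{n}\log|c_J^n z_0^J|-\epsilon\bigr]\longrightarrow 0.
\]
Combined with the first paragraph this gives convergence in probability at every $z_0$, hence $L^1_{\mathrm{loc}}$ convergence in probability via the upper bound and pluripotential compactness. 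For almost sure convergence when $m\ge 3$, I would upgrade this to a Borel--Cantelli statement on a countable dense set of test points: the $m$-th log moment is just sufficient because the summability budget against $d_n\sim n^m$ and the density of test points balances precisely when $m\ge 3$, whereas for $m\in\{1,2\}$ it is only enough for convergence in probability. Hartogs' lemma applied to the relatively compact family $\{\tfrac1n\log|f_n|\}$ then forces full $L^1_{\mathrm{loc}}$ convergence from the a.s.\ pointwise lower bound on a dense set.

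The principal obstacle is precisely this lower bound: producing a quantitative anti-concentration estimate, using only the non-degeneracy of $a_j$, that is robust enough to survive the double union bound (over $n$ and a dense test set) while matching the exact $m$-th log moment budget, and then leveraging a pointwise a.s.\ lower bound on a dense set into $L^1_{\mathrm{loc}}$ convergence of the full family by Hartogs' lemma.
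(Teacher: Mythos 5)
Your upper bound and the overall architecture (pointwise lower bound on a countable dense set of $(\C^*)^m$, Bloom--Levenberg/Hartogs compactness to upgrade to $L^1_{\mathrm{loc}}$, then weak continuity of $\partial\bar\partial$) match the paper. The gap is in the lower bound, and it is a real one. You assert that non-degeneracy of $a_j$ plus an iterated Kolmogorov--Rogozin argument gives $\mathbb{P}\bigl[\bigl|\sum_j a_j\zeta_j\bigr|\le\delta\max_j|\zeta_j|\bigr]$ small \emph{uniformly} over unit $\ell^\infty$-vectors $\zeta$. That is false: take $\zeta=e_1$, so the sum is a single term $a_1$ and the left-hand side is $\mathcal{Q}(a_1,\delta)$, bounded away from $1$ by non-degeneracy but certainly not decaying in $n$. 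The Kolmogorov--Rogozin bound is of order $N^{-1/2}$ where $N$ is the number of coordinates $\zeta_j$ comparable in size to $\max_j|\zeta_j|$; if only $O(1)$ of them are large you get no decay at all. So the quantitative estimate $\mathrm{Prob}_n[\tfrac1n\log|f_n(z)|<V_Q(z)-\epsilon]\le C_\epsilon d_n^{-1/2}$ that you need to run Borel--Cantelli for $m\ge3$ does not follow from non-degeneracy alone; it requires a structural fact about the direction vector $(P_J^n(z))_{|J|\le n}$ itself.

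That structural fact is precisely the paper's Lemma~\ref{orthogonal}, and it is the heart of the proof: for every $z\in(\C^*)^m$ and $\epsilon>0$ there is $\delta>0$ such that, for $n$ large, at least $\delta d_n$ of the ONB polynomials satisfy $|P_J^n(z)|>e^{n(V_Q(z)-3\epsilon)}$. Only because a positive \emph{proportion} of the scaled coefficients $\alpha_J=e^{-n(V_Q(z)-\epsilon)}P_J^n(z)$ exceed $1$ does Kolmogorov--Rogozin deliver the $d_n^{-1/2}$ small-ball bound, and $\sum_n d_n^{-1/2}\sim\sum_n n^{-m/2}<\infty$ is exactly the $m\ge3$ threshold. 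Proving the lemma is itself substantial: it runs a large deviation principle for the measures $b_n^{-1}e^{-2nQ}\,dV_m$, applies Varadhan's lemma to identify $-\lim_n\tfrac1n\log c^n_{nT}$ with the Legendre--Fenchel transform of $\Phi(S)=Q(e^{s_1},\dots,e^{s_m})$, and then uses Berman's $\mathcal{C}^{1,1}$-regularity of $V_Q$ and biconjugacy on the Lelong class to locate a product of intervals $\mathcal{J}\subset[0,1]^m$ of positive measure near the maximizing slope $T_0=\nabla\Psi(S)$, which gives the $\delta d_n$ count. (A minor further point: the countable dense test set costs nothing against the Borel--Cantelli budget, since a countable intersection of probability-one events has probability one; the budget is entirely a summability condition in $n$ at a fixed point.) Without an ingredient playing the role of Lemma~\ref{orthogonal}, your lower bound does not close.
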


Note that Theorem \ref{main 1} provides an optimal condition on random coefficients for a random version of Siciak-Zaharyuta theorem in this context (cf. \cite{Bloom1,B6,BloomL,B7}). In the univariate case we have $\frac{i}{\pi}\partial\overline{\partial}=\frac{1}{2\pi}\Delta$ where $\Delta$ denotes the Laplacian and we denote the corresponding \textit{equilibrium measure} by $\mu_Q:=\frac{i}{\pi}\partial\overline{\partial}V_Q.$ An important example is  $Q(z)=\frac{|z|^2}{2}$ and $\mu_{Q}=\frac{1}{\pi}1_{\Bbb{D}}dz$ where $\Bbb{D}$ denotes closed the unit disc in the complex plane \cite[pp 245]{SaffTotik}. Then a routine calculation shows that
$$P^{n}_j(z)=\sqrt{\frac{n^j}{2\pi j!}}z^j\ \text{for}\ j=0,1,\dots,n$$  form an ONB for $\mathcal{P}_n.$ A random \textit{Weyl polynomial} is of the form
$$W_n(z)=\sum_{j=0}^na_j\sqrt{\frac{n^j}{ j!}}z^j.$$ In particular, Theorem \ref{main 1} generalizes a special case of \cite[Theorem 2.5]{KZ} to the several complex variables. 

Let us denote the Euclidean volume in $\C^{m}$ by $Vol_{2m}$ and for an open set  $U\subset \C^m,$ we define 
$$\mathcal{V}_{U}:=\frac{1}{(m-1)!}\int_{U} \frac{i}{\pi}\partial\overline{\partial}V_{Q}\wedge (\frac{i}{\pi}\partial\overline{\partial}\|z\|^2)^{m-1}.$$ Next result indicates that in higher dimensions the condition $(\ref{A})$ is also necessary for zero divisors of random polynomials to be almost surely equidistributed with the extremal current
$ \frac{i}{\pi}\partial\overline{\partial}V_{Q}$.
\begin{thm}\label{main}
Let $a_j$ be i.i.d. non-degenerate real or complex valued random variables and assume that the dimension of complex Euclidean space $m \geq 3.$ The logarithmic moment 
\begin{equation*}\label{A1}
\Bbb{E}[\big(\log(1+|a_j|)\big)^m]<\infty
\end{equation*}
if and only if
\begin{equation}\label{2}
\p\Big\{\{f_n\}_{n\geq0}: \lim_{n\to \infty}\frac1nVol_{2m-2}(Z_{f_n}\cap U)=\mathcal{V}_{U}\Big\}=1
\end{equation} 
for every open set $U\Subset (\C^*)^m$ such that $\partial U$ has zero Lebesgue measure.
\end{thm}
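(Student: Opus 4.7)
The sufficiency direction is an immediate corollary of Theorem \ref{main 1}. Under $\Bbb{E}[(\log(1+|a_j|))^m]<\infty$, that theorem gives almost surely $\tfrac{1}{n}\log|f_n|\to V_Q$ in $L^1_{loc}(\mathbb{C}^m)$; applying $\tfrac{i}{\pi}\partial\overline{\partial}$ together with the Lelong--Poincar\'e formula yields weak convergence of zero currents $\tfrac{1}{n}[Z_{f_n}]\to\tfrac{i}{\pi}\partial\overline{\partial}V_Q$. Since $V_Q$ is $\mathscr{C}^{1,1}$ (Berman), the limit current has locally bounded coefficients and in particular does not charge the Lebesgue-null set $\partial U$. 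Pairing with $\tfrac{1}{(m-1)!}(\tfrac{i}{\pi}\partial\overline{\partial}\|z\|^2)^{m-1}$ and using Wirtinger's formula to identify this pairing with $\mathrm{Vol}_{2m-2}(Z_{f_n}\cap U)$, weak convergence of currents upgrades to convergence of masses on $U$, which is exactly \eqref{2}.

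For necessity I argue by contrapositive: assume $\Bbb{E}[(\log(1+|a_j|))^m]=+\infty$ and exhibit a single $U\Subset(\mathbb{C}^*)^m$ for which \eqref{2} fails. Choose $U$ so that $\mathcal{V}_U>0$; this is possible because $\tfrac{i}{\pi}\partial\overline{\partial}V_Q$ is a non-trivial positive $(1,1)$-current whose support meets $(\mathbb{C}^*)^m$ by the $m$-circular symmetry of $Q$. The identity $\sum_{j\ge 1}\p\bigl(|a_j|>e^{Cj^{1/m}}\bigr)\asymp \Bbb{E}[(\log(1+|a_j|))^m]/C^m$ shows that this series diverges for every $C>0$, so the second Borel--Cantelli lemma (applicable by independence of the $a_j$) yields almost surely a random subsequence $j_k\to\infty$ with $|a_{j_k}|\ge e^{Cj_k^{1/m}}$. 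Writing $n_k$ for the unique integer with $d_{n_k-1}<j_k\le d_{n_k}$, the growth $d_n\sim n^m/m!$ gives $j_k^{1/m}\asymp n_k$, hence $|a_{j_k}|\ge e^{C'n_k}$ along this subsequence with $C'$ arbitrarily large.

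The core step is to show that along $(n_k)$ the single term $a_{j_k}P^{n_k}_{j_k}(z)=a_{j_k}c^{n_k}_{J_k}z^{J_k}$ dominates the sum $f_{n_k}(z)$ uniformly on $U$, forcing $Z_{f_{n_k}}\cap U=\emptyset$ and thus $\tfrac{1}{n_k}\mathrm{Vol}_{2m-2}(Z_{f_{n_k}}\cap U)=0$, in contradiction with $\mathcal{V}_U>0$. The dominant-term lower bound $|a_{j_k}c^{n_k}_{J_k}z^{J_k}|\ge e^{(C'-C_0)n_k}$ on $U$ would follow from Laplace/saddle-point asymptotics for the defining integral $|c_J^n|^{-2}=\int|z^J|^2e^{-2nQ}dV_m$ together with the fact that $U$ is bounded away from the coordinate hyperplanes, so that $|z^{J_k}|\ge r^{n_k}$ there. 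The remainder is controlled via Cauchy--Schwarz and the Bergman kernel asymptotic $\sum_j|P^n_j(z)|^2=e^{2nV_Q(z)+o(n)}$, yielding $\sum_{j\ne j_k}|a_jP^{n_k}_j(z)|\le\bigl(\sum_{j\ne j_k}|a_j|^2\bigr)^{1/2}e^{n_kV_Q(z)+o(n_k)}$. The delicate point --- and the main obstacle --- is controlling the $\ell^2$-norm $\sum_{j\ne j_k}|a_j|^2$ without any moment assumption on $|a|$: I would handle this by refining the Borel--Cantelli construction, passing to a sub-subsequence along which $a_{j_k}$ is the unique large outlier in its degree block (intersecting with the event that $|a_{j'}|\le e^{C''(j')^{1/m}}$ for every $j'<j_k$ in $\{1,\ldots,d_{n_k}\}$ and some $C''\ll C$, then checking that this thinned subsequence remains almost surely infinite via a Kochen--Stone-type estimate), or alternatively by a slicing reduction to a generic complex line through $U$ combined with the univariate Ibragimov--Zaporozhets / Pritsker--Ramachandran converse. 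Either route shows that \eqref{2} fails on a positive-probability event; since the event ``\eqref{2} holds'' lies in the tail $\sigma$-algebra of $(a_j)_{j\ge 1}$, Kolmogorov's 0--1 law then forces $\p(\text{\eqref{2} holds})=0$, completing the contrapositive.
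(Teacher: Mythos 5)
Your sufficiency direction matches the paper's: combine Theorem \ref{main 1} with the Lelong--Poincar\'e formula, Wirtinger's theorem, and the hypothesis that $\partial U$ is Lebesgue-null (the paper does this via a bracketing with smooth $\varphi_1\leq\chi_U\leq\varphi_2$; your appeal to $\mathscr{C}^{1,1}$ regularity of $V_Q$ and portmanteau serves the same purpose). That half is fine.

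For necessity, your overall strategy is also the paper's: locate a large coefficient $a_{j_k}$, show that the single term $a_{j_k}P^{n_k}_{j_k}$ dominates the remaining sum on $U$ (so $Z_{f_{n_k}}\cap U=\emptyset$ infinitely often), and pick $U$ with $\mathcal V_U>0$. The gap is exactly where you flag it --- control of $\sum_{j\ne j_k}|a_j|^2$ --- and your proposed repair does not work. You suggest intersecting with the event that $|a_{j'}|\le e^{C''(j')^{1/m}}$ for \emph{every} $j'<j_k$; but since $j_k\to\infty$, the union of these constraints over $k$ forces $|a_{j'}|\le e^{C''(j')^{1/m}}$ for \emph{all} $j'$, which under $\Bbb{E}[(\log(1+|a|))^m]=\infty$ has probability zero (for every $C''>0$, second Borel--Cantelli gives $|a_{j'}|>e^{C''(j')^{1/m}}$ infinitely often a.s.). No Kochen--Stone sharpening can rescue an event of probability zero. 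The slicing alternative also stalls: the coefficients of $f_n$ restricted to a complex line are non-i.i.d.\ linear combinations of the $a_j$, so the univariate Ibragimov--Zaporozhets/Pritsker--Ramachandran converse is not directly applicable. Finally, Kolmogorov's 0--1 law is not available here: the event ``\eqref{2} holds'' is not in the tail $\sigma$-algebra of $(a_j)$, because every $f_n$ depends on $a_1$ through the term $a_1P_1^n$.

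What the paper does instead is structurally different in the one place that matters. It selects $j_n$ to be the \emph{argmax of $|a_j|^{1/j}$ over $1\le j\le d_n$}, and works on the probability-one event that this maximum exceeds a geometric threshold $t_n^{1/m}$ for infinitely many $n$. The argmax choice gives for free that $|a_j|\le |a_{j_n}|^{j/j_n}$ for $j<j_n$, a geometric bound which makes $\|(a_j)_{j\ne j_n}\|_{\ell^2}$ comparable to $|a_{j_n}|^{(d_n-1)/d_n}\sqrt{d_n}$ and lets the single large term win after feeding in the Bergman kernel bound and $b_n=\min_j\inf_U|P^n_j|$. This removes the need for any ``all other coefficients are small'' tail event and is the key idea your proposal is missing; without it (or something equivalent) the necessity direction does not close.
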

Note that when $m=1$ the volume $Vol_{2m-2}(Z_{f_n}\cap U)$  becomes the number of zeros of $f_n$ in $U$ which we denote by $$\mathcal{N}_n(U,f_n):=\#\{z\in U:f_n(z)=0\}.$$ The following result is an immediate consequence of Theorem \ref{main 1} together with Theorem \ref{main} and provides a weak universality result for zeros of univariate random polynomials: 

\begin{cor}\label{cor} Let $a_j$ be i.i.d. non-degenerate real or complex valued random variables. If the logarithmic moment
$$\Bbb{E}[\log(1+|a_j|)]<\infty$$ 
then for every $\epsilon>0$
\begin{equation}\label{cp} \lim_{n\to \infty}Prob_n\Big\{f_n: |\frac{1}{n}\mathcal{N}_n(U,f_n)-\mu_{Q}(U)\Big|\geq \epsilon\}=0
\end{equation} 
for every open set $U\Subset \C^*$ such that $\partial U$ has zero Lebesgue measure.
\end{cor}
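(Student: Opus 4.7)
The plan is to deduce the corollary from the convergence-in-probability part of Theorem~\ref{main 1}, which is valid in every dimension $m\ge 1$, by a standard portmanteau-type argument. In dimension one the Poincar\'e--Lelong formula identifies the current $\frac{i}{\pi}\partial\overline{\partial}(\tfrac{1}{n}\log|f_n|)$ with the normalized zero counting measure $\nu_n:=\tfrac{1}{n}\sum_{f_n(z)=0}\delta_z$, and in particular $\nu_n(U)=\tfrac{1}{n}\mathcal{N}_n(U,f_n)$. For $m=1$ the hypothesis $\Bbb{E}[\log(1+|a_j|)]<\infty$ is exactly condition~(\ref{A}), so Theorem~\ref{main 1} yields
$$\int\varphi\,d\nu_n\ \xrightarrow[n\to\infty]{\p}\ \int\varphi\,d\mu_Q$$
for every fixed $\varphi\in\mathscr{C}^\infty_c(\C)$.

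Next, since $V_Q$ is of class $\mathscr{C}^{1,1}$ by Berman's result recalled in the introduction, the equilibrium measure $\mu_Q=\frac{1}{2\pi}\Delta V_Q$ is absolutely continuous with respect to Lebesgue measure on $\C$ with a locally bounded density. As $\partial U$ has Lebesgue measure zero by hypothesis, this forces $\mu_Q(\partial U)=0$, and since $\overline{U}\subset\C^*$ is compact we also have $\mu_Q(\overline{U})<\infty$. Given $\epsilon>0$, one can therefore choose $\varphi_1,\varphi_2\in\mathscr{C}^\infty_c(\C)$ with $\varphi_1\le\mathbf{1}_U\le\varphi_2$ and $\int(\varphi_2-\varphi_1)\,d\mu_Q<\epsilon/2$, obtained by mollifying the indicators of a small inner and outer thickening of $U$.

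The sandwich $\int\varphi_1\,d\nu_n\le\tfrac{1}{n}\mathcal{N}_n(U,f_n)\le\int\varphi_2\,d\nu_n$, combined with the analogous inequalities for $\mu_Q$, yields
$$\left|\tfrac{1}{n}\mathcal{N}_n(U,f_n)-\mu_Q(U)\right|\ \le\ \tfrac{\epsilon}{2}+\max_{i=1,2}\Big|\int\varphi_i\,d\nu_n-\int\varphi_i\,d\mu_Q\Big|.$$
A union bound then reduces the probability $\p\{|\tfrac{1}{n}\mathcal{N}_n(U,f_n)-\mu_Q(U)|\ge\epsilon\}$ to two fixed-test-function convergences, each of which tends to zero by Theorem~\ref{main 1}. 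There is no serious obstacle once Theorem~\ref{main 1} is in hand; the only delicate point is producing the sandwiching pair $\varphi_1,\varphi_2$, which is routine given the absolute continuity of $\mu_Q$ and the condition $\mu_Q(\partial U)=0$.
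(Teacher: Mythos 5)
Your proof is correct and follows essentially the same route the paper has in mind. The paper calls the corollary an ``immediate consequence'' of Theorems~\ref{main 1} and~\ref{main}, and the sufficiency half of the proof of Theorem~\ref{main} is precisely the sandwiching argument you use: bracket $\mathbf{1}_U$ by smooth bump functions $\varphi_1\le\mathbf{1}_U\le\varphi_2$ whose $\mu_Q$-integrals are $\delta$-close, which is possible because $V_Q\in\mathscr{C}^{1,1}$ makes $\mu_Q$ absolutely continuous and hence $\mu_Q(\partial U)=0$. Where the paper's remark after the corollary suggests passing through the subsequence characterization of convergence in probability (extract a subsequence along which one has a.s.\ $L^1_{loc}$-convergence of potentials, then run the Theorem~\ref{main} argument almost surely), you instead push the convergence-in-probability statement of Theorem~\ref{main 1} directly through the two fixed test functions via a union bound; this is slightly more streamlined but amounts to the same thing. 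Both the identification $\nu_n=\frac{i}{\pi}\partial\overline{\partial}(\frac1n\log|f_n|)$ via Poincar\'e--Lelong and the deduction that $\int\varphi\,d\nu_n\to\int\varphi\,d\mu_Q$ in probability for each fixed $\varphi\in\mathscr{C}^\infty_c(\C)$ are sound, since in dimension one convergence of currents is weak-$*$ convergence of measures and follows by integrating $\Delta\varphi$ against the $L^1_{loc}$-convergent potentials.
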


We remark that the condition (\ref{cp}) is called \textit{convergence in probability} in the context of probability theory. Moreover, (\ref{cp}) is equivalent to the following statement: for every subsequence $n_k$ of positive integers there exists a further subsequence $n_{k_j} $ such that $\frac{1}{n_{k_j} }\mathcal{N}_{n_{k_j} }(U,f_{n_{k_j}} )\to \mu_{Q}(U)$ with probability one in $\mathcal{H}.$

Next, we consider \textit{random elliptic polynomials} which are of the form
$$G_n(z)=\sum_{|J|=n}a_J{n \choose J}^{\frac12}z^J$$
where ${n \choose J}=\frac{n!}{(n-|J|)! j_1!\dots j_m!}$ and  $a_J$ are non-degenerate i.i.d. random variables. 

 Let us denote by
 $$\mathcal{M}_{U}:=\frac{1}{(m-1)!}\int_{U}\frac{i}{2\pi}\partial\overline{\partial}(\log(1+\|z\|^2))\wedge (\frac{i}{\pi}\partial\overline{\partial}\|z\|^2)^{m-1}. $$
The following result is an analogue of Theorem \ref{main} in the present setting (see \S\ref{elliptic} for details):

\begin{thm}\label{SU}
Let $a_j$ be i.i.d. non-degenerate real or complex valued random variables and assume that the dimension of complex Euclidean space $m \geq 3.$ The logarithmic moment 
\begin{equation*}
\Bbb{E}[\big(\log(1+|a_j|)\big)^m]<\infty
\end{equation*}
if and only if the zero loci of elliptic polynomials satisfy
\begin{equation}\label{3}
\p\Big\{\{G_n\}_{n\geq0}: \lim_{n\to \infty}\frac1nVol_{2m-2}(Z_{G_n}\cap U)=\mathcal{M}_{U}\Big\}=1
\end{equation} 
for every open set $U\Subset (\C^*)^m$ such that $\partial U$ has zero Lebesgue measure.

\end{thm}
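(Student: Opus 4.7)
The plan is to reduce Theorem \ref{SU} to the mechanism of Theorems \ref{main 1} and \ref{main} by identifying the correct weight. Take $Q(z)=\tfrac{1}{2}\log(1+\|z\|^2)$, which is $\mathscr{C}^2$ and $m$-circular, though borderline for the strict growth assumption (\ref{growth}). A direct calculation, or equivalently the Fubini-Study interpretation of elliptic polynomials as sections of $\mathcal{O}(n)\to\p^m$, identifies $\{\binom{n}{J}^{1/2}z^J\}_{|J|=n}$ with the $L^2$-orthonormal basis of the homogeneous degree-$n$ subspace for the inner product (\ref{n}) associated with this $Q$; the Siciak-Zaharyuta extremal function is $V_Q(z)=\tfrac{1}{2}\log(1+\|z\|^2)$, and $\tfrac{i}{\pi}\partial\overline{\partial}V_Q$ is precisely the integrand appearing in $\mathcal{M}_U$.

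For the ``if'' direction, I would mimic the proof of Theorem \ref{main 1}. The crucial probabilistic input is the almost sure bound $\max_{|J|\le n}|a_J|\le e^{o(n)}$: via the layer-cake identity, $\Bbb{E}[(\log^+|a|)^m]<\infty$ is equivalent to $\sum_j \prob[|a_j|>e^{\epsilon j^{1/m}}]<\infty$ for every $\epsilon>0$, and Borel-Cantelli applied to the cumulative list $\{a_J:|J|\le n\}$ of cardinality $\sim n^m/m!$ then yields the desired bound. Combined with a Bernstein-Walsh inequality for the weight $Q$, this gives the upper envelope $\limsup_n\tfrac{1}{n}\log|G_n(z)|\le V_Q(z)$ locally uniformly a.s. The matching lower envelope comes from non-degeneracy together with an anti-concentration estimate at a countable dense set, closed up by Hartogs-type semicontinuity for psh functions. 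Thus $\tfrac{1}{n}\log|G_n|\to V_Q$ in $L^1_{loc}(\C^m)$ almost surely; applying $\tfrac{i}{\pi}\partial\overline{\partial}$ and pairing with $(\tfrac{i}{\pi}\partial\overline{\partial}\|z\|^2)^{m-1}$ then yields (\ref{3}) for every $U\Subset(\C^*)^m$ with $\partial U$ Lebesgue-null.

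For the ``only if'' direction I would argue contrapositively: if $\Bbb{E}[(\log(1+|a|))^m]=\infty$, the converse Borel-Cantelli lemma applied to the independent events $\{|a_j|>e^{\epsilon j^{1/m}}\}$ produces almost surely infinitely many pairs $(n_k,J(n_k))$ with $|J(n_k)|=n_k$ and $|a_{J(n_k)}|\ge e^{\epsilon n_k}$. Together with the upper bound on the remaining coefficients, the single monomial $a_{J(n_k)}\binom{n_k}{J(n_k)}^{1/2}z^{J(n_k)}$ dominates $G_{n_k}$ on large subsets of $(\C^*)^m$, so that $\tfrac{1}{n_k}\log|G_{n_k}|$ converges along the subsequence to a psh function strictly exceeding $V_Q$ on a set of positive Lebesgue measure. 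By Bedford-Taylor continuity of the Monge-Amp\`ere operator, this contradicts the equidistribution (\ref{3}) on any open $U\Subset(\C^*)^m$ bounded away from $\{z_1\cdots z_m=0\}$ for which $\mathcal{M}_U>0$.

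The main obstacle I anticipate is making the monomial-dominance step in the ``only if'' direction quantitative enough to produce a genuine contradiction for a \emph{fixed} open set $U\Subset(\C^*)^m$: one must verify that the dominant indices $J(n_k)/n_k$ cannot be pushed so far toward the boundary of the unit simplex that the would-be limit current avoids $U$ entirely, forcing $\mathcal{M}_U=0$ trivially. A secondary technical point is the borderline growth of $Q=\tfrac{1}{2}\log(1+\|z\|^2)$ with respect to (\ref{growth}), which I would handle either by approximating from above by truncations $Q_R=\max\{Q,(1+\epsilon)\log^+(\|z\|/R)\}$ and letting $R\to\infty$, or by transferring the argument entirely to $\p^m$, where the corresponding Bernstein-Walsh and Fubini-Study facts are classical.
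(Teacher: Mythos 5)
Your overall strategy of reducing Theorem~\ref{SU} to the machinery of Theorems~\ref{main 1} and~\ref{main} with the weight $Q(z)=\tfrac12\log(1+\|z\|^2)$ is exactly the paper's strategy, and your identification of $V_Q=Q$ and the Fubini--Study interpretation is correct. However, there are two points where your proposal either diverges from the paper or leaves a real gap.

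In the ``if'' direction your outline is sound but hides the essential technical ingredient. Saying the lower envelope ``comes from non-degeneracy together with an anti-concentration estimate at a countable dense set'' does not by itself explain why anything is summable. The paper's mechanism, via the analogue of Lemma~\ref{orthogonal}, is an LDP/Legendre--Fenchel argument showing that at each fixed $z\in(\C^*)^m$ a \emph{positive fraction} of the $d_n\sim n^m$ basis monomials are of size $\geq e^{n(V_Q(z)-\epsilon)}$; feeding this into the Kolmogorov--Rogozin inequality gives $\mathcal{Q}(G_n(z)e^{-n(V_Q(z)-\epsilon)},e^{-\epsilon n})\lesssim d_n^{-1/2}\sim n^{-m/2}$, and this polynomial decay is what makes Borel--Cantelli (and hence the a.s.\ lower bound) work precisely when $m\geq3$. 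Without that step one has no quantitative anti-concentration and the argument does not close.

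In the ``only if'' direction you take a genuinely different route from the paper, and your own worry about it is well founded. The paper does not argue via convergence of limit potentials and Bedford--Taylor continuity; instead, from $\Bbb{E}[(\log(1+|a_j|))^m]=\infty$ it extracts (by the converse Borel--Cantelli lemma) infinitely many $n$ for which a single coefficient $a_{j_n}$ is so large that the monomial term $a_{j_n}P_{j_n}^n$ \emph{strictly dominates} the sum of all the other terms uniformly on $\overline U$. Since $P_{j_n}^n(z)=c_J z^J$ is a monomial, it has no zeros in $(\C^*)^m\supset U$, so $G_n$ has no zeros at all in $U$ for those $n$, i.e.\ $\tfrac1n\mathrm{Vol}_{2m-2}(Z_{G_n}\cap U)=0$ infinitely often, contradicting~(\ref{3}) directly. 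This entirely sidesteps the obstruction you flag about where $J(n_k)/n_k$ lands in the simplex: it is irrelevant, because \emph{every} monomial misses $(\C^*)^m$. Your Bedford--Taylor approach would additionally need a compactness argument plus a separate argument to show the limit potential differs from $V_Q$ on $U$ specifically, none of which the paper needs.

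Finally, you are right that $Q=\tfrac12\log(1+\|z\|^2)$ only barely fails the growth hypothesis~(\ref{growth}). The paper does not regularize $Q$; rather it works directly with the elliptic inner product $\int F\overline G\,(1+\|z\|^2)^{-(n+m+1)}dV_m$, whose extra factor $(1+\|z\|^2)^{-(m+1)}$ restores integrability while not affecting the exponential scale of the LDP or the Bergman kernel asymptotics $\tfrac1{2n}\log S_n(z,z)\to V_Q(z)$. Your proposed truncation $Q_R$ or the passage to $\p^m$ are viable alternatives, but they are not what the proof of Theorem~\ref{SU} does.

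Also note a minor slip: you write the elliptic polynomial as $\sum_{|J|=n}$, but the relevant basis for $\mathcal P_n$ (and the one used in the proof) is $\{\binom{n}{J}^{1/2}z^J\}_{|J|\leq n}$, i.e.\ all multi-indices of total degree at most $n$.
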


Finally, we consider random linear combinations of univariate orthonormal polynomials of regular asymptotic behavior (cf. \cite[\S 3]{StahlTotik}). Orthogonal polynomials of regular $n^{th}$ root asymptotic behavior are natural generalizations of classical orthogonal polynomials on the real line. More precisely, let $\mu$ be a measure Borel measure with compact support $S_{\mu}\subset \C.$ We assume that the support $S_{\mu}$ contains infinitely many points and its logarithmic capacity $Cap(S_{\mu})>0.$ We let $\Omega:=\overline{\C}\setminus S_{\mu}$ and $g_{\Omega}(z,\infty)$ denotes the Green function with logarithmic pole at infinity. Then the equilibrium measure of the support $S_{\mu}$ is given by $\nu_{S_{\mu}}:=\Delta g_{\Omega}(z,\infty).$ We say that $\Omega$ is \textit{regular} if $g(z,\infty)\equiv 0$ on $S_{\mu}.$ It is well know that if $\Omega$ is regular then $g(z,\infty)$ is continuous on $\C.$ Next, we define the inner product induced by $\mu:$
$$\la f,g \ra:=\int_{\C}f(z)\overline{g(z)}d\mu $$ on the space of polynomials $\mathcal{P}_n.$ Then one can find uniquely defined orthonormal polynomials
 $$P_n^{\mu}(z)=\gamma_n(\mu)z^n+\cdots,\ \text{where}\ \gamma_n(\mu)>0\ \text{and}\ n\in\Bbb{N}.$$ We say that $\mu$ is \textit{regular}, denoted by $\mu\in \textbf{Reg}$, if
\begin{equation}
\lim_{n\to \infty}\gamma_n(\mu)^{1/n}=\frac{1}{Cap(S_{\mu})}.
\end{equation}
 For a fixed $\mu\in \textbf{Reg},$ we consider random linear combinations of orthonormal polynomials 
$$f_n(z)=\sum_{j=0}^na_jP^{\mu}_j(z)$$
and we obtain the following generalization:

\begin{thm}\label{onp}
Let $\mu\in \textbf{Reg}$ such that $\Omega:=\overline{\C}\setminus S_{\mu}$ is connected and regular. Assume that the convex hull $Co(S_{\mu})$ has Lebesgue measure zero (hence, $Co(S_{\mu})$ is a line segment). If the logarithmic moment
$$\Bbb{E}[\log(1+|a_j|)]<\infty$$ 
then for every $\epsilon>0$ 
\begin{equation*} \lim_{n\to \infty}Prob_n\Big\{f_n: |\frac{1}{n}\mathcal{N}_n(U,f_n)-\nu_{S_{\mu}}(U)\Big|\geq \epsilon\}=0
\end{equation*} 
for every open set $U\Subset \C^*$ such that $\partial U$ has zero Lebesgue measure. 
\end{thm}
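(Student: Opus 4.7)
The plan is to prove $L^1_{loc}(\C)$-convergence in probability of $u_n(z):=\frac{1}{n}\log|f_n(z)|$ to $g_{\Omega}(z,\infty)$; by Poincar\'e--Lelong this implies weak convergence in probability of the normalized zero-counting measures $\nu_n:=\frac{1}{n}\sum_{f_n(z)=0}\delta_z$ to $\nu_{S_{\mu}}$, and the conclusion for open sets $U\Subset \C^*$ with $\partial U$ of Lebesgue measure zero follows from the portmanteau theorem (using that $\nu_{S_{\mu}}$ is supported in $S_{\mu}\subset Co(S_{\mu})$, which has Lebesgue measure zero, so $\nu_{S_{\mu}}(\partial U)=0$). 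I would apply the subsequence characterization of convergence in probability: it suffices to extract from every subsequence of integers a further subsequence along which $u_{n_k}\to g_{\Omega}(\cdot,\infty)$ almost surely in $L^1_{loc}(\C)$.

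The deterministic input is the Stahl--Totik theory: under $\mu\in\textbf{Reg}$ with $\Omega$ connected and regular,
$$\frac{1}{n}\log|P_n^{\mu}(z)|\to g_{\Omega}(z,\infty)\quad\text{locally uniformly on }\C\setminus Co(S_{\mu}),$$
together with the global Bernstein--Walsh upper bound $\frac{1}{n}\log|P_n^{\mu}(z)|\le g_{\Omega}(z,\infty)+o(1)$ locally uniformly on $\C$, and $\gamma_n(\mu)^{1/n}\to 1/Cap(S_{\mu})$. The hypothesis that $Co(S_{\mu})$ has zero Lebesgue measure ensures these asymptotics hold on a set of full Lebesgue measure, permitting $L^1_{loc}$ tests. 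For the upper bound on $u_n$ I would combine $|f_n(z)|\le (n+1)\max_{j\le n}|a_j|\cdot\max_{j\le n}|P_j^{\mu}(z)|$ with the Ibragimov--Zaporozhets lemma: under $\Bbb{E}[\log(1+|a_j|)]<\infty$, Borel--Cantelli gives $\frac{1}{n}\max_{j\le n}\log^+|a_j|\to 0$ almost surely. Since $g_{\Omega}\ge 0$ forces $\max_{j\le n}|P_j^{\mu}(z)|^{1/n}\to e^{g_{\Omega}(z,\infty)}$ outside $Co(S_{\mu})$, this yields $\limsup_n u_n(z)\le g_{\Omega}(z,\infty)$ almost surely, locally uniformly on $\C\setminus Co(S_{\mu})$.

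For the lower bound I would use Hartogs' lemma to extract a further subsequence along which $u_{n_k}\to u_{\infty}$ in $L^1_{loc}$ a.s., with $u_{\infty}\in\mathcal{L}(\C)$ and $u_{\infty}\le g_{\Omega}$. To identify $u_{\infty}=g_{\Omega}$ I would write
$$u_n(z)=\frac{1}{n}\log|a_n\gamma_n(\mu)|+\int\log|z-w|\,d\nu_n(w).$$
Non-degeneracy of the $a_j$'s, combined with Borel--Cantelli applied to a positive-probability event $\{|a_i|\ge\delta\}$, yields a sub-subsequence along which $\frac{1}{n_{k_j}}\log|a_{n_{k_j}}|\to 0$ almost surely; combined with $\gamma_n(\mu)^{1/n}\to 1/Cap(S_{\mu})$, this pins the Robin constant of $u_{\infty}$ at infinity to $-\log Cap(S_{\mu})$, matching that of $g_{\Omega}$. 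Tightness of $\{\nu_n\}$ --- obtained by exploiting the Stahl--Totik lower bound on $|P_n^{\mu}|$ far from $Co(S_{\mu})$ to confine zeros of $f_n$ to a compact neighborhood of $S_{\mu}$ with probability tending to one --- together with the uniqueness of equilibrium potentials then forces $u_{\infty}=g_{\Omega}$. The main obstacle is precisely this lower-bound identification: in the weighted polynomial setting of Theorem \ref{main 1}, Berman's quantitative Bergman kernel asymptotics supply direct lower bounds on $u_n$; here only the qualitative Stahl--Totik $n^{\mathrm{th}}$-root asymptotic outside $Co(S_{\mu})$ is available, and the hypothesis that $Co(S_{\mu})$ has zero Lebesgue measure is essential for the $L^1_{loc}$-identification of $u_{\infty}$ to succeed.
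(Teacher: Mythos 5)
Your proposed proof diverges substantially from the paper's argument for the lower bound, and as laid out it contains a real gap.

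The paper's lower bound does not go through a Robin-constant/tightness/uniqueness-of-potentials argument. Instead, the Stahl--Totik asymptotic $\frac{1}{j}\log|P_j^{\mu}(z)|\to g_{\Omega}(z,\infty)$ (valid locally uniformly on $\C\setminus Co(S_{\mu})$) shows that for each fixed $z\notin Co(S_{\mu})$ and $\epsilon>0$, a proportion $\sim n$ of the indices $j\le n$ satisfy $|P_j^{\mu}(z)|>e^{n(g_{\Omega}(z,\infty)-\epsilon)}$; this plays exactly the role that Lemma \ref{orthogonal} plays in Theorem \ref{main 1}. The Kolmogorov--Rogozin concentration inequality (via (\ref{levi}), as in (\ref{es1})--(\ref{es3})) then gives the pointwise-in-$z$ bound
$Prob_n\big[\,\tfrac{1}{n}\log|f_n(z)|<g_{\Omega}(z,\infty)-\epsilon\,\big]\le C_{\epsilon}/\sqrt{n}$,
which, integrated over compacts and combined with the upper bound, yields convergence in probability of $\|\tfrac1n\log|f_n|-g_{\Omega}\|_{L^1(K)}$ to zero directly. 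Your comment that ``here only the qualitative Stahl--Totik $n$-th root asymptotic outside $Co(S_{\mu})$ is available'' therefore misreads the mechanism: the paper's lower bound in \emph{both} Theorems \ref{main 1} and \ref{onp} is an anti-concentration argument, not a direct estimate on $u_n$, and the Stahl--Totik theorem is precisely the needed input.

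Your alternative route has a concrete gap. To invoke the subsequence characterization of convergence in probability you must produce, from each subsequence of integers, a \emph{deterministic} further subsequence along which $u_{n_k}\to g_{\Omega}$ almost surely. But the step where you pass to a ``sub-subsequence along which $\frac{1}{n_{k_j}}\log|a_{n_{k_j}}|\to 0$'' by Borel--Cantelli applied to $\{|a_i|\ge\delta\}$ produces an $\omega$-dependent index set (the indices $j$ with $|a_j|\ge\delta$ vary with the realization), so this extraction does not furnish the deterministic subsequence the characterization requires. Relatedly, the factorization $u_n=\frac{1}{n}\log|a_n\gamma_n(\mu)|+\int\log|z-w|\,d\nu_n(w)$ presupposes $a_n\ne0$, i.e., that $f_n$ has exact degree $n$; non-degeneracy does not rule out $a_n=0$ on a positive-probability event for each $n$ (e.g.\ Bernoulli coefficients taking value $0$), so this decomposition is not available unconditionally. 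Even granting tightness of $\{\nu_n\}$ and the identification of $u_{\infty}$, these two issues prevent your outline from closing as written, whereas the Kolmogorov--Rogozin route avoids the need for any $\omega$-dependent extraction and makes no use of the leading coefficient.
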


 We remark that if $\mu$ is a Bernstein-Markov measure with compact support in $\C$ then $\mu\in\textbf{Reg}$ (\cite[Proposition 3.4]{Bloom1}). In particular, any Bernstein-Markov measure $\mu$ supported on a compact subset of the real line falls in the framework of Theorem \ref{onp}. The latter class contains classical orthogonal polynomials such as Chebyshev or Jacobi polynomials.

\section*{Acknowledgment}
We thank N. Levenberg and T. Bloom for their comments on an earlier version of this manuscript. We are grateful to N. Levenberg for pointing out Theorem \ref{onp} falls in the framework of this work.
\section{Background}\label{prelim}
\subsection{Probabilistic preliminaries}\label{probprelim} For a complex (respectively real) random variable $\eta$ we let $\prob$ denote its probability law and denote its concentration function by
$$\mathcal{Q}(\eta,r):=\sup_{z\in \Bbb{C}}\prob[\eta\in B(z,r)]$$ where $B(z,r)$ denotes the Euclidean ball (respectively interval) centered at $z$ and of radius $r>0.$ We say that $\eta$ is \textit{non-degenerate} if $\mathcal{Q}(\eta,r)<1$ for some $r>0.$ If $\eta$ and $\xi$ are independent complex random variables and $r,c>0$ then we have 
\begin{equation}\label{levi}
\mathcal{Q}(\eta+\xi,r)\leq \min\{\mathcal{Q}(\eta,r),\mathcal{Q}(\xi,r)\}\ \text{and} \ \mathcal{Q}(c\zeta,r)=\mathcal{Q}(\zeta,\frac{r}{c}).
\end{equation}

Let $a_1,a_2,\dots $ be independent and identically distributed (real or complex valued) random variables. The following lemma is standard in the literature and  it will be useful in the sequel.
\begin{lem}\label{lem1} Let $a_j$ be a sequence of i.i.d. real or complex valued random variables for $j=1,2,\dots$
\begin{itemize}
\item[(i)] If $\Bbb{E}[\big(\log(1+|a_j|)\big)^m]<\infty$ then for each $\epsilon>0$ almost surely
\begin{equation}\label{id}
|a_j|< e^{\sqrt[m]{\epsilon j}}
\end{equation} 
for sufficiently large $j.$
\item[(ii)] If $\Bbb{E}[(\log(1+|a_j|))^m]=\infty$ then almost surely
$$ \limsup_{j\to \infty}|a_j|^{\frac1j}=\infty.$$
\end{itemize}
\end{lem}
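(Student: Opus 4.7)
The plan is to derive both parts from the two Borel-Cantelli lemmas applied to the i.i.d.\ sequence $L_j := \log(1+|a_j|)$, which are i.i.d.\ copies of $L := \log(1+|a_1|)$. The bridge between the logarithmic moment hypothesis and the summability condition needed by Borel-Cantelli is the layer-cake identity
\[
\Bbb{E}[L^m] \;=\; m\int_0^\infty t^{m-1}\,\mathbf{P}(L \geq t)\,dt,
\]
which after the substitution $t=(\epsilon j)^{1/m}$ and in view of the monotonicity of $t\mapsto\mathbf{P}(L\geq t)$ is comparable, up to a constant depending only on $m$ and $\epsilon$, to the discrete series $\sum_{j\geq 1}\mathbf{P}(L\geq (\epsilon j)^{1/m})$.

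For part (i), I would fix $\epsilon>0$ and exploit the inclusion $\{|a_j|\geq e^{(\epsilon j)^{1/m}}\}\subset \{L_j \geq (\epsilon j)^{1/m}\}$, which holds because $\log(1+e^t)\geq t$ for $t\geq 0$. The layer-cake comparison together with the hypothesis $\Bbb{E}[L^m]<\infty$ then yields $\sum_j \mathbf{P}(|a_j|\geq e^{(\epsilon j)^{1/m}})<\infty$, and the first Borel-Cantelli lemma produces the desired almost-sure bound $|a_j|<e^{(\epsilon j)^{1/m}}$ for all $j$ sufficiently large.

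For part (ii), I would run the same layer-cake identity in the reverse direction: $\Bbb{E}[L^m]=\infty$ forces $\sum_{j\geq 1}\mathbf{P}(L_j \geq c\,j^{1/m})=\infty$ for every $c>0$. Since the $a_j$ are independent the corresponding events are independent, so the second (converse) Borel-Cantelli lemma yields $L_j\geq c\,j^{1/m}$ for infinitely many $j$ almost surely, for each fixed $c$. Exponentiating and exhausting $c$ along an unbounded sequence (using a countable intersection of full-measure events) then yields the asserted conclusion on $\limsup_{j\to\infty}|a_j|^{1/j}$.

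I expect the only slightly delicate point to be the integral-to-series comparison in each direction: one has to separate the contribution of $t$ near $0$ (which is bounded and harmless) from the tail, and to take care in converting strict versus non-strict inequalities into the clean events on which Borel-Cantelli operates. Beyond that the argument uses no probabilistic input deeper than the two Borel-Cantelli lemmas and the layer-cake representation, so the overall structure is quite standard and the proof should be short.
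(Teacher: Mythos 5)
Part (i) is fine and is essentially the paper's argument: you compare $\Bbb{E}[(\log(1+|a_1|))^m]$ to the series $\sum_j\prob\{\log(1+|a_1|)\geq(\epsilon j)^{1/m}\}$ and invoke the first Borel--Cantelli lemma. The paper phrases the comparison as the discrete inequality $\sum_{j\geq1}\prob[X\geq j]\leq\Bbb{E}[X]$ applied to $X=\epsilon^{-1}(\log(1+|a_1|))^m$, whereas you use the continuous layer-cake identity followed by an integral-to-series comparison; these are the same bookkeeping in slightly different clothing.

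In part (ii), however, the last step of your argument has a real gap once $m\geq 2$. After the second Borel--Cantelli lemma you have, for each fixed $c>0$, almost surely $\log(1+|a_j|)\geq c\,j^{1/m}$ for infinitely many $j$. Exponentiating gives $|a_j|\geq e^{c j^{1/m}}-1$ infinitely often, hence
$$|a_j|^{1/j}\;\geq\;\bigl(e^{c j^{1/m}}-1\bigr)^{1/j}\;=\;e^{\,c\,j^{1/m-1}}\bigl(1+o(1)\bigr).$$
When $m=1$ this tends to $e^{c}$ and sending $c\to\infty$ does give $\limsup_j|a_j|^{1/j}=\infty$. But for $m\geq 2$ the exponent $c\,j^{1/m-1}\to 0$, so the right-hand side tends to $1$ regardless of $c$; "exhausting $c$ along an unbounded sequence" only yields $\limsup_j|a_j|^{1/j}\geq 1$, not $\infty$. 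The deficiency is that your events $\{L_j\geq c\,j^{1/m}\}$ are calibrated to the hypothesis $\Bbb{E}[L^m]=\infty$ (that is what makes the probabilities sum to infinity), but they are far too weak to control $|a_j|^{1/j}$. The paper works instead with events of the form $A_j^M=\{|a_j|^{m/j}\geq M\}$, i.e.\ events calibrated directly to the conclusion: if $A_j^M$ occurs infinitely often for every $M$ along an unbounded sequence, then $\limsup_j|a_j|^{m/j}=\infty$, and since $m$ is a fixed constant this is the same as $\limsup_j|a_j|^{1/j}=\infty$. So the paper's final step is immediate, whereas yours does not close. To repair your argument you would need to show that the second Borel--Cantelli lemma applies to events of the shape $\{|a_j|\geq M^{j}\}$ (or $\{|a_j|\geq M^{j/m}\}$), not to the events $\{L_j\geq c\,j^{1/m}\}$ you chose; the divergence of the corresponding probability series is the crux, and it is a genuinely different summability question from the one your layer-cake computation addresses.
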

\begin{proof}
For a non-negative random variable $X$ we have
\begin{equation}\label{ineq1}
\sum_{j=1}^{\infty}\prob[X\geq j]\leq \Bbb{E}[X]\leq 1+\sum_{j=1}^{\infty}\prob[X\geq j].
\end{equation} 
Letting $X=\frac{1}{\epsilon}(\log(1+|a_1|))^m$ and using the assumption that $a_j$ are identically distributed, we obtain
$$\sum_{j=1}^{\infty}\prob[a_j\in \C: |a_j|\geq e^{\sqrt[m]{j\epsilon}}]<\infty.$$ 
 Hence, by independence of $a_j$'s and Borel-Cantelli lemma we have almost surely
$$|a_j|<e^{\sqrt[m]{j\epsilon}}$$ for sufficiently large $j$.\\ \indent
For (ii), we define the event $A^M_j:=\{a_j\in\C:|a_j|^{\frac{m}{j}}\geq M\}$ where $M>1$ is fixed. Then by (\ref{ineq1}) 
$$\sum_{j=1}^{\infty}\prob_n[A^M_j]=\infty$$ and second Borel-Cantelli lemma implies that almost surely $|a_j|^{\frac{m}{j}}\geq M$ for infinitely many values of $j.$  Now, we let $M_n>0$ be a sequence such that $M_n\uparrow \infty.$ Then by previous argument the event
$$F_n:=\{|a_j|^{\frac{m}{j}}\geq M_n\ \text{for infinitely many}\ j\}$$  has probability one. Thus letting 
$F=\cap_{n=1}^{\infty}F_n$ has also probability one and (ii) follows.
\end{proof}

\subsection{Pluripotential Theory}
\subsubsection{Global extremal function} Let $\Sigma\subset \C^m$ be a closed set. Recall that an \textit{admissible weight function} $Q:\C^m\to \Bbb{R}$ is a lower semi-continuous function that satisfies 
\begin{enumerate} 
\item$ \{z\in\Sigma: Q(z)<\infty\}\ \text{is not pluripolar}$
\item $\displaystyle \lim_{\|z\|\to \infty}(Q(z)-\log\|z\|)=\infty$ if $\Sigma$ is unbounded.
\end{enumerate}
The \textit{weighted extremal function} associated to the pair $(\Sigma,Q)$ is defined by
\begin{equation}
V_{\Sigma,Q}=\sup\{u(z):u\in\mathcal{L}(\C^m), u\leq Q\ \text{on}\ \Sigma\}.
\end{equation}
If $\Sigma=\C^m$ and $Q$ is an admissible weight function we write $V_Q$ for short. We also let $V_{\Sigma,Q}^*$ denote the upper semi-continuous regularization of $V_{\Sigma,Q}$ that is $V_{\Sigma,Q}^*(z):=\displaystyle\limsup_{\zeta\to z}V_{\Sigma,Q}(\zeta).$  It is well known that $V_{\Sigma,Q}^*\in \mathcal{L}^+(\C^m)$ (see \cite[Appendix B]{SaffTotik}). Moreover, for an admissible weight function $Q$ the set $$\{z\in \C^m: V_{\Sigma,Q}(z)<V_{\Sigma,Q}^*(z)\}$$ is pluripolar. We also remark that when $Q\equiv 0$ and $\Sigma$ is a non-pluripolar compact set the function $V_ {\Sigma}^*$ is nothing but the pluricomplex Green function of $\Sigma$ (see \cite[\S 5]{Klimek}). We let $B(r)$ denote the ball in $\C^m$ centered at the origin and with radius $r>0.$ Then it is well known \cite[Appendix B]{SaffTotik} that for sufficiently large r 
\begin{equation}
V_Q=V_{B(r),Q}\ \text{on}\ \C^m
\end{equation} for every admissible weight function $Q.$ It also follows from a result of Siciak \cite[Proposition 2.16]{Siciak} that if $Q$ is a continuous admissible weight function then $V_Q=V_Q^*$ on $\C^m.$ We refer the reader to the manuscript \cite[Appendix B]{SaffTotik} for further properties of the weighted global extremal function. 
 
\subsubsection{Bergman kernel asymptotics}\label{scvprelim} In the sequel we will assume that $Q:\C^m\to\Bbb{R}$ is a $\mathscr{C}^2$ weight function satisfying (\ref{growth}) and (\ref{circular}). The \textit{Bergman kernel} for the Hilbert space of weighted polynomials $\mathcal{P}_n$ may be defined as
 $$S_n(z,w):=\sum_{j=1}^{d_n}P_j^{n}(z)\overline{P_j^{n}(w)}$$ where $\{P_j^n\}_{j=1}^{d_n}$ is an ONB for $\mathcal{P}_n$ as in the introduction. The restriction of the Bergman kernel over the diagonal is given by
 $$S_n(z,z)=\sum_{j=1}^{d_n}|P^{n}_j(z)|^2.$$
It is well known \cite[\S6]{BloomL} (cf. \cite{Berman1,B4}) that 
$$\frac{1}{2n}\log S_n(z,z)\to V_Q(z)\ \text{locally uniformly on}\ \C^m.$$ 
 
\section{Proofs}\label{proofs}

\begin{proof}[Proof of Theorem \ref{main 1}]
By \cite[Proposition 4.4]{BloomL} it is enough to prove that almost surely in $\mathcal{H},$ for any subsequence $I$ of positive integers 
$$(\limsup_{n\in I}\frac1n\log|f_n(z)|)^*=V_{Q}(z)$$ for all $z\in \C^m.$ To this end we fix a subsequence $I$ of positive integers.

\textbf{Step 1: Proof of upper bound.}
Note that by Lemma \ref{lem1} for each $\epsilon>0$ there exists $j_0\in\Bbb{N}$ such that almost surely
$$\sum_{j=j_0}^{d_n}|a_j|^2\leq d_ne^{2\sqrt[m]{\epsilon d_n}}.$$
Then using $d_n=O(n^m)$ and by Cauchy-Schwarz inequality almost surely in $\mathcal{H}$
\begin{eqnarray*}
\limsup_{n\in I}\frac1n\log|f_n(z)| &=& \limsup_{n\in I}\big(\frac1n \log\frac{|f_n(z)|}{\sqrt{S_n(z,z)}}+\frac{1}{2n}\log S_n(z,z)\big)\\
&\leq& \limsup_{n\to \infty}\big(\frac{1}{2n}\log(\sum_{j=1}^{d_n}|a_j|^2) +\frac{1}{2n}\log S_n(z,z)\big)\\
&\leq& \epsilon+V_{Q}(z)
\end{eqnarray*} on $\C^m.$ Thus, it follows from \cite[Lemma 2.1]{Bloom1} that
$(\displaystyle\limsup_{n\in I}\frac1n\log|f_n(z)|)^*\in\mathcal{L}(\C^m)$ and
\begin{equation}\label{limsup}
F(z):=(\limsup_{n\in I}\frac1n\log|f_n(z)|)^*\leq V_{Q}(z)
\end{equation} holds on $\C^m$ almost surely in $\mathcal{H}.$\\ 

\textbf{Step 2: Proof of lower bound.} In order to get the lower bound first we prove the following lemma which is a generalization of \cite[Proposition 2.1]{B8}:

\begin{lem}\label{orthogonal}
For every $\epsilon>0$ and $z\in (\C^*)^m$ there exists $\delta>0$ such that for sufficiently large $n\in\Bbb{N}$
$$\#\{j\in\{1,\dots,d_n\}: P^n_j(z)>e^{n(V_{Q}(z)-3\epsilon)}\}\geq \delta d_n.$$
\end{lem}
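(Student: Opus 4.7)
The plan is to exploit the $m$-circularity of $Q$ to express each ONB element explicitly as a normalized monomial, apply a Laplace/Legendre-transform analysis to the normalizing constants, and then count a positive density of multi-indices at which the resulting rate function is nearly extremal.

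By $m$-circularity, the monomials $\{z^J\}_{|J|\leq n}$ are mutually orthogonal in $(\mathcal{P}_n,\langle\cdot,\cdot\rangle_n)$, so $P_j^n(z)=c_J^n z^J$ with
$$(c_J^n)^{-2}=(2\pi)^m\int_{\Bbb{R}_{\geq 0}^m}r^{2J+\mathbf{1}}e^{-2nQ(r)}\,dr.$$
Substituting $r_i=e^{s_i}$ and setting $\tilde Q(s):=Q(e^{s_1},\ldots,e^{s_m})$, this becomes (up to polynomial factors in $n$) a Laplace integral with exponent $2n[(J/n)\cdot s-\tilde Q(s)]$. The growth hypothesis (\ref{growth}) makes the exponent coercive in $s$ uniformly for $J/n$ in any compact subset of the simplex $\Delta:=\{\alpha\in\Bbb{R}_{\geq 0}^m:|\alpha|\leq 1\}$, so a standard Laplace/saddle-point analysis yields, uniformly on compacts of $\Delta$,
$$\frac{1}{n}\log (c_J^n)^{-2}\xrightarrow[n\to\infty]{}2\tilde Q^*(\alpha)\quad\text{as }J/n\to\alpha,$$
where $\tilde Q^*(\alpha):=\sup_{s\in\Bbb{R}^m}[\alpha\cdot s-\tilde Q(s)]$ is the Legendre transform of $\tilde Q$.

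Combining this with $\frac{1}{n}\log|z|^{2J}=2\frac{J}{n}\cdot\log|z|$, for $z\in(\C^*)^m$ we obtain the uniform asymptotic
$$\frac{1}{n}\log|P_j^n(z)|\longrightarrow\phi(\alpha,z):=\alpha\cdot\log|z|-\tilde Q^*(\alpha).$$
Since $S_n(z,z)=\sum_J|P_j^n(z)|^2$ has polynomially many terms, applying the Laplace principle to this sum and comparing with the Bergman kernel asymptotic $\frac{1}{2n}\log S_n(z,z)\to V_Q(z)$ from \S\ref{scvprelim} identifies
$$V_Q(z)=\sup_{\alpha\in\Delta}\phi(\alpha,z).$$

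The $\mathscr{C}^2$ regularity of $Q$ together with the growth condition place $\Delta$ well inside the effective domain of $\tilde Q^*$, so standard convex analysis shows $\tilde Q^*$, and hence $\phi(\cdot,z)$, is continuous on $\Delta$. Choose a maximizer $\alpha^*\in\Delta$, so $\phi(\alpha^*,z)=V_Q(z)$. By continuity, the super-level set $U(z,\epsilon):=\{\alpha\in\Delta:\phi(\alpha,z)>V_Q(z)-2\epsilon\}$ is a relatively open neighborhood of $\alpha^*$ in $\Delta$ and therefore has positive $m$-dimensional Lebesgue measure $c(z,\epsilon)>0$. A Riemann-sum comparison gives
$$\#\{J\in\Bbb{N}^m\cap n\Delta:J/n\in U(z,\epsilon)\}\geq \delta\, d_n$$
for some $\delta=\delta(z,\epsilon)>0$ and all $n$ large (using $d_n\sim n^m/m!$). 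Uniform convergence of $\frac{1}{n}\log|P_j^n(z)|\to\phi(\alpha,z)$ on a compact neighborhood of $\alpha^*$ then yields $|P_j^n(z)|>e^{n(V_Q(z)-3\epsilon)}$ for each such $J$ and all sufficiently large $n$, completing the proof.

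The main obstacle is making the Laplace asymptotic for $c_J^n$, together with the continuity of $\tilde Q^*$, uniform on a full Euclidean neighborhood of the maximizer $\alpha^*$ in $\Delta$, even when $\alpha^*$ lies on a boundary face (such as $\alpha_i=0$ or $|\alpha|_1=1$). This is standard under the smoothness and growth hypotheses on $Q$ but requires some bookkeeping; once it is in place the lattice-counting argument is essentially a continuity statement.
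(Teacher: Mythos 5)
Your proposal follows the same overall outline as the paper: reduce to the diagonal ONB $P_j^n(z)=c_J^n z^J$, apply a Laplace/Varadhan asymptotic to the normalizing constants to produce $\frac{1}{n}\log c_J^n \to -\tilde Q^\star(J/n)$ as a Legendre transform in the exponent variables, and count lattice multi-indices in a positive-Lebesgue-measure super-level set of the rate function $\phi(\alpha,z)=\alpha\cdot\log|z|-\tilde Q^\star(\alpha)$. The genuine divergence is in how you establish $\sup_{\alpha\in\Delta}\phi(\alpha,z)=V_Q(z)$: you deduce it from the Bergman kernel asymptotic $\frac{1}{2n}\log S_n(z,z)\to V_Q(z)$ by applying the Laplace principle to the sum defining $S_n$, whereas the paper argues purely by Fenchel duality. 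It sets $\Psi(S)=V_Q(e^{s_1},\dots,e^{s_m})$, uses Berman's $\mathscr{C}^{1,1}$ regularity to write $\Psi=\Psi^{\star\star}$, the monotonicity $\Psi^\star\geq\Phi^\star$ (from $V_Q\leq Q$), and Rockafellar's Theorem 23.5 together with $V_Q\in\mathcal{L}$ to exhibit an explicit near-maximizer $T_0=\nabla\Psi(S)\in[0,1]^m$. Your route trades the convex-analytic package ($C^{1,1}$ regularity, biconjugation, identification of the subgradient) for a single pluripotential-theoretic black box that the paper already invokes elsewhere, so neither is obviously more economical.

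One caution about logical ordering: the identity $V_Q=\sup_\alpha\phi(\cdot,z)$ that you extract from the Bergman kernel already presupposes the uniform Laplace asymptotic for $c_J^n$ on all of $\Delta$ (including near boundary faces) — precisely the issue you flag at the end as "the main obstacle." That uniformity must therefore be established before, not after, you use the variational identity, or your argument becomes circular. The paper avoids this dependency because Fenchel duality produces the good index $T_0$ directly, after which Varadhan's lemma is needed only pointwise at $T_0$ plus continuity of $\Phi^\star$ on a small box around $T_0$ (the paper writes "lower-semicontinuity," but what is actually used is continuity of the finite convex function $\Phi^\star$ on a neighborhood of $[0,1]^m$, which the growth hypothesis guarantees). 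If you make the uniform Laplace estimate on a closed neighborhood of the maximizer $\alpha^\star$ explicit — including the case where $\alpha^\star$ sits on a boundary face — your argument is complete and correct.
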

\begin{proof}
We denote the probability measures $\mu_n:=\frac{1}{b_n}e^{-2nQ(z)}dV_m$ where the normalizing constants $b_n:=\int_{\C^m}e^{-2nQ(z)}dV_m$.  It follows that the sequence of measures $\{\mu_n\}_{n=1}^{\infty}$ satisfies large deviation principle (LDP) on $\C^m$ with the rate function $\mathcal{I}(z)=2[Q(z)-\displaystyle\inf_{w\in\C^m}Q(w)]$ (see e.g. \cite[1.1.5]{DeSt}). More precisely, for $A\subset \C^m$ letting 
$$\mathcal{I}(A):=\inf_{z\in A}\mathcal{I}(z)$$
we have
$$\limsup_{n\to \infty}\frac1n\log\mu_n(K)\leq -\mathcal{I}(K)\ \text{and}\ \liminf_{n\to\infty}\frac1n\log\mu_n(U)\geq -\mathcal{I}(U) $$
for every closed set $K\subset \C^m$ and every open set $U\subset \C^m.$ 

Next, we define $$c^n_{nT}:=(\int_{\C^m}|z^T|^{2n}e^{-2nQ(z)}dV_m)^{-\frac12}$$ where $T\in [0,1]^m$ is a multi-index and $z^T=z_1^{t_1}\cdots z_m^{t_m}$. Then by Varadhan's lemma \cite[Theorem 2.1.10]{DeSt} and (\ref{growth}), for every such $T=(t_1,\dots,t_m)$
\begin{eqnarray*}
-\lim_{n\to \infty}\frac1n\log c^n_{nT} &=&\sup_{r\in\Bbb{R}^m_+}(\sum_{j=1}^mt_j\log r_j-Q(r_1,\dots,r_m))\\
&=& \sup_{S\in \R}(\la S,T\ra-Q(e^{s_1},\dots,e^{s_m}))\\
&=:&u(T).
\end{eqnarray*}
Let us denote by 
$$\Phi(S):=Q(e^{s_1},\dots,e^{s_m})$$ where $S=(s_1,\dots,s_m)\in \R$ and Legendre-Fenchel transform of $\Phi$ is by definition given by
\begin{eqnarray*}
\Phi^{\star}(T):&=&\sup_{S\in \R}(\la S,T\ra-\Phi(S))\\
&=&\sup_{S\in \R_{\geq0}}(\la S,T\ra-\Phi(S)).
\end{eqnarray*}
where the second equality follows from $Q\geq 0$.  Since $u(T)=\Phi^{\star}(T)$ for $T\in [0,1]^m$ the function $u(T)$ is a lower-semicontinuous convex on $[0,1]^m$. 

On the other hand, denoting by $\Psi(S):=V_Q(e^{s_1},\dots,e^{s_m})$ since $\Psi$ is a $\mathcal{C}^{1,1}$ convex function we have
$$\Psi(S)=\Psi^{\star\star}(S).$$ Thus, for every $\epsilon>0$ and $S\in \R$ there exists $T_0\in R^m_{\geq0}$ such that
$$\Psi(S)-\epsilon<\langle S,T_0\rangle-\Psi^{\star}(T_0)\leq \langle S,T_0\rangle-\Phi^{\star}(T_0)$$ where the latter inequality follows from the inequality $V_Q\leq Q$ on $\C^m.$ Moreover, it follows from \cite[Theorem 23.5]{Rockafellar} and $V_Q\in \mathcal{C}^{1,1}(\C^m)$ that
$T_0=\nabla\Psi(S)$ and hence by using $V_Q\in \mathcal{L}$ we conclude that $T_0\in [0,1]^m.$ Thus, for every $\epsilon>0$ and $S\in \R$ there exists $T_0\in [0,1]^m$ such that
$$\la S,T_0\ra-u(T_0)>V_Q(e^{s_1},\dots,e^{s_m})-\epsilon$$ and by lower-semicontinuity of $u$ there exists a product of intervals $\mathcal{J}\subset [0,1]^m$ containing $T_0$ such that the Lebesgue measure $|\mathcal{J}|>0$ and
$$\la S,T\ra-u(T)>V_Q(e^{s_1},\dots,e^{s_m})-2\epsilon\ \ \text{for every}\ T\in \mathcal{J}.$$ Now, for fixed $z\in(\C^*)^m$ letting $S=(\log|z_1|,\dots,\log|z_m|)$ then for sufficiently large $n$ we have
$$\frac1n \log (c^n_{Tn}|z^T|^{n})>V_{Q}(z)-3\epsilon$$ for every $T\in \mathcal{J}.$ 
Finally, letting $\mathcal{J}_n:=\{J\in\Bbb{N}^m: |J|\leq n\ \text{and}\ \frac{1}{n}J\in \mathcal{J}\}$ where $\frac1nJ:=(\frac{j_1}{n},\dots,\frac{j_m}{n})$ we see that for sufficiently large $n$ we have $$\#\mathcal{J}_n\geq \frac{d_n}{2}|\mathcal{J}|$$ where $|\mathcal{J}|$ denotes Lebesgue measure of $\mathcal{J}\subset \R.$ 
\end{proof}

Now, we turn back to proof of the lower bound. For fixed $z\in(\C^*)^m$ and for every $\epsilon>0$ by Lemma \ref{orthogonal}  there exists a product  interval $\mathcal{J}\subset [0,1]^m$ such that
 $$P_j^n(z)>e^{n(V_{Q}(z)-\epsilon)}$$ where $P_j^n(z)=C_J^nz^J$ and $J\in \mathcal{J}_n:=\{|J|\leq n: \frac{1}{n}J\in \mathcal{J}\}$. 
 Next, we define the random variables
 $$X_n:=\sum_{j\in \mathcal{J}_n}a_j\alpha_j\ \text{and}\ Y_n:=\sum_{j\not\in \mathcal{J}_n}a_j\alpha_j$$
where $$\alpha_j:=e^{-n(V_{Q}(z)-\epsilon)}P_j^n(z).$$ Then by (\ref{levi}) and sufficiently large $n$ we have
 \begin{equation}\label{es1}
 Prob_n[f_n:|f_n(z)|<e^{n(V_{Q}(z)-2\epsilon)}]\leq \mathcal{Q}(X_n+Y_n, e^{-\epsilon n})\leq \mathcal{Q}(X_n,e^{-\epsilon n}).
 \end{equation}
 Now, it follows from Kolmogorov-Rogozin inequality \cite{Esseen} and $\alpha_j>1$ that
 \begin{equation}\label{es2} \mathcal{Q}(X_n,e^{-\epsilon n})\leq C_1(\sum_{J\in \mathcal{J}_n}(1- \mathcal{Q}(a_j\alpha_j,e^{-\epsilon n}))^{-\frac12}\leq C_2 |\mathcal{J}_n|^{-\frac12}\leq C_3(d_n)^{-\frac12}.
 \end{equation}
Hence combining (\ref{es1}) and (\ref{es2}) we obtain: for every $z\in (\C^*)^m$ there exists $C_{\epsilon}>0$ such that
 \begin{equation}\label{es3}
  Prob_n[f_n:\frac1n\log|f_n(z)|<V_{Q}(z)-\epsilon]\leq \frac{C_{\epsilon}}{\sqrt{n^m}}.
 \end{equation}
Since $m\geq 3,$ it follows from Borel-Cantelli lemma and (\ref{es3}) that with probability one in $\mathcal{H}$
\begin{equation}\label{liminf}
\liminf_{n\to \infty}\frac1n\log|f_n(z)|\geq V_Q(z).
\end{equation}
 Thus, we conclude that for each $z\in(\C^*)^m$ there exits a subset $\mathcal{C}_z\subset\mathcal{H}$ of probability one such that that for every sequence $\{f_n\}_{n\in\Bbb{N}}\in \mathcal{C}_z$  
 \begin{equation}\label{lim}
F(z)=(\limsup_{n\in I}\frac1n\log|f_n(z)|)^*=V_Q(z)
 \end{equation}
 Next, we fix a countable dense subset $D:=\{z_j\}_{j\in\Bbb{N}}$ in $\C^m$ such that $z_j\in(\C^*)^m$ and (\ref{lim}) holds. Then, we define
$$\mathcal{C}:=\cap_{j=1}^{\infty}\mathcal{C}_{z_j}.$$ Note that $\mathcal{C}\subset \mathcal{H}$ is also of probability one. Since $V_{Q}(z)$ is continuous on $\C^m$ we have 
$$V_{Q}(z)=\lim_{z_j\in D, z_j\to z}V_{Q}(z_j)\leq \limsup_{z_j\in D,z_j\to z}F(z_j)\leq F(z)$$ where the second inequality follows from (\ref{liminf}) and the last one follows from upper-semicontinuity of $F(z).$  We deduce that for every $\{f_n\}_{n\in\Bbb{N}}\in \mathcal{C}$
$$F(z)=V_Q(z)$$ for every $z\in(\C^*)^m.$  Since $\{z\in\C^m: z_1\cdots z_m=0\}$ has Lebesgue measure zero, by a well-known property of psh functions we conclude that $$F(z)=V_Q(z)$$ for every $z\in \C^m.$ This completes the proof for dimensions $m\geq3$.

On the other hand, it follows from \cite[Proposition 4.4]{BloomL}, Step 1, (\ref{es3}) and the preceding argument that for every  $\epsilon>0,$ open set $U\Subset \C^m$ and sufficiently large $n$
$$Prob_n[f_n\in\mathcal{P}_n:  \|\frac1n\log|f_n|-V_Q\|_{L^1(U)}\geq \epsilon]\leq \frac{C_{\epsilon}}{\sqrt{n^m}}$$
which gives the second assertion.
\end{proof}

\begin{proof}[Proof of Theorem \ref{main}]

First, we prove that (\ref{A}) is a sufficient condition for (\ref{2}). We fix an open set $U\Subset (\C^*)^m$ such that $\partial U$ has zero Lebesgue measure. Let us denote by
$$\Theta:=\frac{1}{(m-1)!}\frac{i}{\pi}\partial\overline{\partial}V_{Q}\wedge (\frac{i}{2}\partial\bar{\partial}\|z\|^2)^{m-1}.$$ For $\delta>0$ arbitrary, we fix real valued smooth functions $\varphi_1,\varphi_2$ such that $0\leq \varphi_1\leq \chi_U\leq \varphi_2\leq 1$ and 
$$\int_U\Theta-\delta\leq \int_{\C^m}\varphi_1\Theta\leq \int_{\C^m}\varphi_2\Theta\leq \int_{\overline{U}}\Theta+\delta.$$  Now, letting 
$$\psi_j:=\frac{\varphi_j}{(m-1)!}(\frac{i}{2}\partial\bar{\partial}\|z\|^2)^{m-1}$$ for $j=1,2$ by Wirtinger's theorem we have
$$Vol_{2m-2}(Z_{f_n}\cap U)\leq \int_{Z_{f_n}}\psi_2.$$ 
Then by Theorem \ref{main 1} 
\begin{eqnarray*}
\limsup_{n\to \infty}\frac1n Vol_{2m-2}(Z_{f_n}\cap U) &\leq &\int_{\C^m} \varphi_2\Theta\\
&\leq& \int_{\overline{U}}\Theta+\delta.
\end{eqnarray*}
Similarly one can obtain 
$$\liminf_{n\to \infty}\frac1nVol_{2m-2}(Z_{f_n}\cap U)\geq \int_{U}\Theta-\delta.$$ Since $\delta>0$ is arbitrary the assertion follows.

Next, we prove that (\ref{A}) is a necessary condition for (\ref{2}). We will prove the assertion by contradiction. Assume that 
 $$\Bbb{E}[\big(\log(1+|a_j|)\big)^m]=\infty.$$ 
 By assumption $U\Subset (\C^*)^m$ so we have $0<b_n:=\min_{j=1,\dots,d_n}\inf_{z\in U}|P^n_{j}(z)|.$ For $\epsilon>0$ small we let
 $$t_n:=\big(\frac{e^{n(M_Q+\epsilon)}}{b_n}\big)^m$$ 
 where $M_Q:=\sup_{\overline{U}}V_{Q}.$ 
 Then by the argument in the proof of Lemma \ref{lem1} (ii) for each $n\in \Bbb{N}_+$ the set
 $$F_n:=\{|a_{j}|^{m/j}\geq t_n\ \text{for infinitely many}\ j\}$$ has probability one. This implies that
 $$F:=\cap_{n=1}^{\infty} F_n$$ has also probability one. 
Thus, we may assume that for  infinitely many values of $n$ there exists $j_n\in\{1,\dots,d_n\}$ such that 
 \begin{equation}\label{ff}
\max_{j=1,\dots ,d_n} |a_j|^{\frac1j}= |a_{j_n}|^{\frac{1}{j_n}}\ \text{and} \ |a_{j_n}|\geq t_n^{j_n/m}.
 \end{equation}
For simplicity of notation let us assume $j_n=d_n.$   
Now, we will show that the random polynomial $f_n(z)=\sum_{j=1}^{d_n}a_jP_j^{n}(z)$ has no zeros in $U$ for infinitely many values of $n$.  Denoting $a':=(a_j)_{j=1}^{d_n-1},$ by Cauchy-Schwarz inequality, uniform convergence of the Bergman kernel on $\overline{U}$ and (\ref{ff}) we have 
  \begin{eqnarray*}
 |\sum_{j=1}^{d_n-1}a_jP_j^{n}(z)| 
 &\leq& \|a'\| S_n(z,z)^{\frac12} \\
 &\leq & \sqrt{d_n}|a_{d_n}|^{\frac{d_n-1}{d_n}} \exp(n(V_{Q}(z)+\frac{\epsilon}{2}))\\
 &\leq &  |a_{d_n}|^{\frac{d_n-1}{d_n}} \exp(n(M_Q+\epsilon))\\
 &= & \frac{ \exp(n(M_Q+\epsilon))}{|a_{d_n}|^{\frac{1}{d_n}}} |a_{d_n}|\\
 &<& b_n|a_{d_n}| 
 \end{eqnarray*}
 for infinitely many values of $n$. Hence,
 $$\sup_{z\in U} |\sum_{j=1}^{d_n-1}a_jP_j^{n}(z)| <\inf_{z\in U}|a_{d_n}P_{d_n}^n(z)|.$$ 
\end{proof}

\section{Generalizations and Concluding remarks}
\subsection{Elliptic Polynomials}\label{elliptic} Recall that a \textit{random elliptic polynomial} in $\C^m$ is of the form
$$G_n(z)=\sum_{|J|\leq n}a_J{n \choose J}^{\frac12}z^J$$
where ${n \choose J}=\frac{n!}{(n-|J|)! j_1!\dots j_m!}$ and  $a_J$ are non-degenerate i.i.d. random variables. These polynomials induced by taking $Q(z)=\frac12\log(1+\|z\|^2)$ i.e. the potential of the standard Fubini-Study K\"ahler metric on the complex projective space $\C\Bbb{P}^m.$ In this case, the scaled monomials ${N \choose J}^{\frac12}z^J$ form an ONB with respect to the inner product 
\begin{equation*}
\langle F_n,G_n\rangle_n:=\int_{\C^m}F_n(z)\overline{G_n(z)}\frac{dV_m(z)}{(1+\|z\|^2)^{n+m+1}}
\end{equation*}
 Moreover, since $Q(z)$ is itself a Lelong class of psh function the weighted extremal function in this setting is given by $$V_Q(z)=Q(z)=\frac12\log(1+\|z\|^2).$$ Specializing further, if the coefficients $a_J$ are standard i.i.d. complex Gaussians this ensemble is known as $SU(m+1)$ polynomials and their zero distribution was studied extensively among others by \cite{BBL,SZ}. 
 
 \begin{proof}[Proof of Theorem \ref{SU}]
 Since the proof is very similar to that of Theorems \ref{main 1} and \ref{main} we explain the modifications in the present setting. 

By \cite[Proposition 4.4]{BloomL} it is enough to prove that almost surely in $\mathcal{H},$ for any subsequence $I$ of positive integers 
$$F(z):=(\limsup_{n\in I}\frac1n\log|f_n(z)|)^*=V_{Q}(z)\ \text{for all}\ z\in \C^m $$ 

In order to prove  the upper bound $F(z)\leq V_Q(z),$ we use the same argument as in Thorem \ref{main 1} together with the Bergman kernel asymptotics. Namely,
letting $S_n(z,z):=\sum_{|J|\leq n} {n \choose J}|z^{2J}| $ a routine calculation gives 
$$\frac{1}{2n}\log S_n(z,z)\to \frac12\log(1+\|z\|^2)$$ locally uniformly on $\C^m$ (see eg. \cite{SZ}).
On the other hand, for the lower bound (\ref{liminf}), we need an analogue of Lemma \ref{orthogonal}. Note that $Q(z)=\frac12\log(1+\|z\|^2)$ is a multi-circular weight function whose infimum is 0 attained at $z=0.$ Then proceeding as in the proof Lemma \ref{orthogonal}, one can show that the sequence of measures $\mu_n:=\frac{1}{a_n}e^{-2nQ(z)}dV_m$ verifies a LDP with rate function $\mathcal{I}(z)=2Q(z).$ This result and Kolmogorov-Rogozin inequality allow us to prove an analogue of (\ref{es3}) in the present setting. This together with the argument in the first part of the proof of Theorem \ref{main} finish the proof of sufficiency of (\ref{A}).  In order to prove necessity, we use the Bergman kernel asymptotics and we apply the same argument as in the second part of the proof of Theorem \ref{main}. 
 \end{proof}

\subsection{Regular Orthonormal Polynomials}

\begin{proof}[Proof of Theorem \ref{onp}] We proceed as in the proof of Theorems \ref{main 1} and \ref{main}. To this end we fix a subsequence $n_k$ of positive integers. It follows from  \cite[Theorem 3.1(ii) ]{StahlTotik} that
\begin{equation}\label{st}
\lim_{n\to\infty}\frac1n\log |P^{\mu}_n(z)|= g_{\Omega}(z,\infty)
\end{equation} holds locally uniformly on $\C\setminus Co(S_{\mu}).$ Denoting the Bergman kernel by
$$S_n(z,z):=\sum_{j=0}^{n}|P_j^{\mu}(z)|^2$$ we infer that
$$\frac{1}{2n}\log S_n(z,z)\to g_{\Omega}(z,\infty)$$ locally uniformly on $\C\setminus Co(S_{\mu}).$ Thus, by Lemma \ref{lem1} and Cauchy-Schwarz inequality almost surely in $\mathcal{H}$ we have
$$\limsup_{n_k\to \infty}\frac{1}{n_k}\log|f_{n_k}(z)|\leq g_{\Omega}(z,\infty)$$ for every $z\in \C\setminus Co(S_{\mu}).$  

In order to prove the lower bound, we use the local uniform convergence (\ref{st}) which replaces Lemma \ref{orthogonal}. This in turn together with Kolmogorov-Rogozin inequality give
$$Prob_n[f_n:\frac1n\log|f_n(z)|<g_{\Omega}(z,\infty)-\epsilon]\leq \frac{C_{\epsilon}}{\sqrt{n}}$$ for every $z\in\C^*\setminus Co(S_{\mu}).$ Then applying the argument in Theorem \ref{main} using the assumption $Co(S_{\mu})$ has Lebesgue measure zero we obtain the assertion.
\end{proof}

\subsection{Almost sure convergence in lower dimensions}
 In order to get almost sure convergence in Theorems \ref{main 1} and \ref{main} for complex dimensions $m\leq 2$ we need a stronger form of Kolmogorov-Rogozin inequality. More precisely, for a fixed unit vector $\uz\in\C^n,$ i.i.d. real or complex random variables $a_j$ for $ j=1,\dots ,n$ and $\epsilon\geq 0$ we consider the \textit{small ball probability}
$$p_{\epsilon}(\uz):=\prob_n[\{a^{(n)}:|\la \az,\uz\ra|\leq\epsilon\}]$$ 
where $\prob_n$ is the product probability measure induced by the law of $a_j's$ and $\la\az,\uz\ra:=\sum_{j=1}^na_j\uz_j.$ In order to obtain the lower bound in Theorem \ref{main 1} we need for every $\epsilon>0$
\begin{equation}
\sum_{n\geq 1}p_{e^{-\epsilon n}}(u^{(d_n)})<\infty 
\end{equation} for every unit vector $u^{(d_n)}\in\C^{d_n}$.

We remark that if the random variables $a_j$ are standard (real or complex) Gaussians then the probability $p_{\epsilon}(\uz)\sim \epsilon.$ In particular, $p_{\epsilon}(u^{(n)})$ does not depend on the direction of the vector $\uz.$  However, for most other distributions, $p_{\epsilon}(\uz)$ does depend on the direction of $\uz.$ For instance if $a_j$ are Bernoulli random variables (i.e. taking values $\pm1$ with probability $\frac12$) then $p_{0}((1,1,0,\dots,0))=\frac12$ on the other hand,  $p_{0}((1,1,\dots,1))\sim n^{-\frac12}.$ Determining small ball probabilities is a classical theme in probability theory. We refer the reader to the manuscripts \cite{FriS,TaoV,RV1,RV2} and references therein for more details.\\ \indent  
 Another interesting problem is to find a necessary and sufficient condition for almost sure convergence of normalized zero currents when the space of polynomials $\mathcal{P}_n$ is endowed with $d_n$-fold product probability measure. A sufficient condition was obtained in \cite{B6}. Namely, let $a_j^n$ be iid random variables whose probability $\prob$ has a bounded density and logarithmically decaying tails i.e.
 \begin{equation}\label{tailc}
 \prob\{a_j\in\C:\log |a_j|>R\}= O(R^{-\rho})\ \text{as}\ R\to \infty\ \text{for some}\ \rho>m+1.
 \end{equation}
 We consider random polynomials of the form $f_n(z)=\sum_{j=1}^{d_n}a^n_jP^{n}_j(z)$. If (\ref{tailc}) holds then almost surely normalized zero currents $\frac1n[Z_{f_n}]$ converges weakly to the extremal current $\frac{i}{\pi}\partial\overline{\partial}V_Q.$ 
\subsubsection{Higher codimensions} In \cite[Theorem 1.2]{B6} (see also \cite{B7}) it is proved that if the coefficients of random polynomials $f_n(z)=\sum_{j=1}^{d_n}a^n_jP^n_j(z)$ are i.i.d random variables whose distribution law verifies (\ref{tailc}) then almost surely normalized empirical measure of zeros
$$\frac{1}{n^m}\sum_{\{z\in\C^m:f^1_n(z)=\dots=f^m_n(z)=0\}}\delta_z$$ of $m$ i.i.d. random polynomials $f_n^1,\dots,f_n^m$  converges weakly to the weighted equilibrium measure $(\frac{i}{\pi}\partial\overline{\partial}V_{\Sigma,Q}^*)^m$. In the present paper, we have observed that for codimension one we no longer need $a_j$ to have a density with respect to Lebesgue measure. For instance, $a_j$ can be discrete such as Bernoulli random variables. It would be interesting to know if \cite[Theorem 1.2]{B6} or a weaker form of it (eg. convergence with high probability) also generalizes to the setting of  discrete random variables.


\end{document}